            							    	            %

\documentclass[a4paper, 11pt]{article}

\usepackage{calc, amsmath, amsthm, a4, latexsym, amssymb, color, url, soul}
\usepackage{graphicx}
\DeclareGraphicsRule{*}{mps}{*}{}

\setlength{\parindent}{0pt}
\setlength{\parskip}{1ex plus 0.5ex minus 0.2ex}
\addtolength{\textwidth}{1cm}
\setlength{\oddsidemargin}{(\paperwidth - \textwidth)/2 - 72pt}

\definecolor{comcolor}{rgb}{0.9,0.3,0.3}
\definecolor{starcolor}{rgb}{0.3,0.3,0.9}
\definecolor{hscolor}{rgb}{0.9,0.6,0.5}
\definecolor{darkgreen}{rgb}{0.1,0.6,0.3}

\newtheorem{thm}{Theorem}[section]
\newtheorem{lemma}[thm]{Lemma}

\newtheorem{prop}[thm]{Proposition}

\theoremstyle{definition}
\newtheorem{defn}[thm]{Definition}

\newtheorem{rem}[thm]{Remark}

\newcommand{\be}[1]{\begin{equation}\label{#1}}
\newcommand{\ee}{\end{equation}}
\newcommand{\ba}{\begin{array}}
\newcommand{\ea}{\end{array}}
\newcommand{\bal}{\begin{aligned}}
\newcommand{\eal}{\end{aligned}}

\newcommand{\R}{\mathbb{R}}

\newcommand{\N}{\mathbb{N}}

\newcommand{\E}{\mathbb{E}}

\renewcommand{\P}{\mathbb{P}}

\newcommand{\1}{1\hspace{-0.098cm}\mathrm{l}}

\newcommand{\dd}{{\text{d}}}

\begin{document}

\begin{center}
{\Large \bf Structural properties of the seed bank and the two island diffusion.}\\[5mm]

\vspace{0.7cm}
\textsc{Jochen Blath$^1$, Eugenio Buzzoni$^1$, Adri\'an Gonz\'alez Casanova$^2$, Maite Wilke Berenguer$^1$} 

\vspace{0.5cm}
$^1$ Institut f\"ur Mathematik, Technische Universit\"at Berlin, Germany.\\
$^2$ Weierstra\ss\ Institut Berlin, Germany and Instituto de Matem\'aticas, Universidad Nacional Aut\'onoma de M\'exico,  Mexico.\\
\end{center}

\begin{abstract}
\noindent We investigate various aspects of the (biallelic) {\em Wright-Fisher diffusion with seed bank} in conjunction with and contrast to the \emph{two-island model} analysed e.g.\ in \cite{KZH08} and \cite{NG93}, including moments, stationary distribution and reversibility, for which our main tool is duality. Further, we show that the Wright-Fisher diffusion with seed bank can be reformulated as a one-dimensional stochastic delay differential equation, providing an elegant interpretation of the age structure in the seed bank also forward in time in the spirit of \cite{KKL01}. We also provide a complete boundary classification for this two-dimensional SDE using martingale-based reasoning known as McKean's argument.

  \par\medskip
  \footnotesize
  \noindent \emph{2010 Mathematics Subject Classification}:
  Primary\, 60K35, \ Secondary\, 92D10.%
  \par\medskip
\end{abstract}

\noindent{\slshape\bfseries Keywords:} Wright-Fisher diffusion, seed bank coalescent, two island model, boundary classification, duality, reversibility, stochastic delay differential equation.

\section{Introduction}
\label{sec:intro}

The \emph{Wright-Fisher diffusion} is a classical probabilistic object in mathematical population genetics. It describes  the scaling limit of the neutral allele frequencies in a large haploid population on a macroscopic timescale, known es \emph{evolutionary timescale}. This diffusion process and its generalizations have been thoroughly investigated, starting with the pioneering work of Wright \cite{W31}. See also \cite{EK92, Fu03, Et11} for more recent mathematical accounts, in particular regarding characterizations of the stationary distribution in the presence of weak neutral mutation and corresponding boundary classification, and for further references. It is well known that the scaling limit of the genealogy of a sample from this model is given by a \emph{Kingman coalescent} (with mutation), cf.\  \cite{W08} for an overview of coalescent theory.

In the presence of population structure, e.g.\ in the guise of the two-island model \cite{W31, M59}, many new effects appear.
In particular, the genealogy of a sample taken from the subdivided population will now be described by a \emph{structured coalescent} instead of the classical Kingman coalescent, in which two lines may merge only at times when both are in the same island.  Yet, other qualitative features remain unchanged, including the fact that the structured coalescent with two islands still ``comes down from infinity'', and that the Wright-Fisher diffusion with two islands (without mutation) will eventually fixate. In this model, there seems to be no explicit characterization of the stationary distribution, though recursion formulas may still be found, see e.g.\ \cite{NG93, Fu03, KZH08} for results in this direction. We are also not aware of a full boundary classification. The standard Feller approach via speed measure and scale function cannot be employed here, since the two island model leads to a two-dimensional diffusion.

The situation changes further when a strong seed bank is being added to a classical Wright-Fisher model. Scenarios with seed bank are less well analyzed, and in fact only recently, in \cite{BGCKW16}, the \emph{Wright-Fisher diffusion with seed bank} and its dual, the {\em seed bank coalescent}, have been introduced as mathematical objects (see also \cite{LM15}, in which the same dual has been obtained as scaling limit of the genealogy in a metapopulation model with peripatric speciation). While at first glance similar to the two-island model and the structured coalescent, the seed bank diffusion and its dual exhibit some remarkable qualitative differences. For example, the seed bank coalescent \emph{does not} come down from infinity, and its expected time to the most recent common ancestor is unbounded as the sample size increases (see \cite{BGCKW16} for details). Hence it is a natural task to investigate the properties and relation between these models.

The paper is organized as follows: The basic models under consideration are introduced in Section \ref{ssn:thebasicmodel} followed by the characterization of the seed bank diffusion as a {\em stochastic delay differential equation} in  Section \ref{ssn:generalizations}, providing an elegant manifestation of the age-structure introduced by seed banks in a forward-in-time model similar to that seen in the backward-in-time considerations in the classical modeling of seed banks in \cite{KKL01}.  In Section \ref{sec:duality}, we observe a non-standard dual processes for our models with mutation that allows us to characterize the moments of the unique stationary distribution with the help of recursions and show that the seed bank diffusion is non-reversible. 
Finally, in Section \ref{sec:boundary}, we investigate the boundary behavior of the seed bank and the two island diffusion using a technique called McKean's argument, which is based on the martingale convergence theorem on stochastic intervals and is suitable also in multi-dimensional settings. A complete analysis of both models is possible, since they are instances of so-called {\em polynomial diffusions}, which have recently drawn considerable attention in the finance literature (see e.g.\ \cite{FL16}). We think that the flexibility of McKean’s argument should potentially make it widely applicable in population genetics, beyond the two models descussed above.

\subsection{The model(s)}
\label{ssn:thebasicmodel}

The \emph{Wright-Fisher diffusion with seed bank} was recently introduced in \cite{BGCKW16} as the forward in time scaling limit of a bi-allelic Wright-Fisher model (with type space $\{a,A\}$) that describes a population where individuals may stay inactive in a \emph{dormant form} such as seeds or spores (in the \emph{seed bank}), essentially ``jumping'' a significant (geometrically distributed) number of generations, before rejoining the \emph{active} population. For an active population of size $N$ and a seed bank size $M=\lfloor K  N \rfloor$,  $K>0$, under the classical scaling of speeding time by a factor $N$ the $a$-allele frequency process $(X^N_t)_{t \geq 0}$ in the active and $(Y^N_t)_{t \geq 0}$ in the dormant population converge to the (unique strong) solution $(X_t,Y_t)_{t\geq 0}$ of a two-dimensional SDE. In \cite{BEGCKW15} the model was extended to include mutation in both the active and the dormant population in which case the limiting process is the solution to the SDE given in Definition \ref{defn:system} below. Since the population model and limiting result are completely analogous to the case without mutation we refrain from details and instead refer to \cite{BGCKW16}, Section 2. 

\begin{defn}[Seed bank diffusion]
\label{defn:system}
Let $(B(t))_{t\geq 0}$ be a standard Brownian motion, $u_1,u_2,u_1',u_2'$ be finite, non-negative constants and $c, K$ finite, positive constants. The {\em Wright-Fisher diffusion with seed bank} with parameters $u_1,u_2,u_1',u_2',c,K$, starting in $(x, y)\in [0,1]^2$, is given by the $[0,1]^2$-valued continuous strong Markov process $(X(t), Y(t))_{t \geq 0}$ that is the unique strong solution of the initial value problem 
\begin{align}
\label{eq:system}
\text{d} X(t) & = \big[-u_1X(t) +u_2(1-X(t))+ c(Y(t) -X(t))\big]\text{d}t + \sqrt{X(t)(1-X(t))}\text{d}B_t, \notag \\[.1cm]
\text{d} Y(t) & = \big[-u_1'Y(t) +u_2'(1-Y(t)) + Kc(X(t) -Y(t))\big]\text{d}t,
\end{align}
with $(X(0), Y(0)) =(x,y) \in [0,1]^2$. 
\end{defn}
 
The first coordinate process $(X(t))_{ t \geq 0}$ can be interpreted as describing the fraction of $a$-alleles in the limiting \emph{active population}, while $(Y(t))_{ t \geq 0}$ gives the fraction of $a$-alleles in the limiting \emph{dormant population}, i.e.\ in the seed bank. 
The parameters $u_1, u_2$ describe the mutation rates from $a$ to $A$,  respectively from $A$ to $a$, in the active population, and $u_1', u_2'$ the corresponding values in the seed bank. Note that the mutation rates may differ for active and dormant individuals. $K$ fixes the so-called \emph{relative seed bank size} ($M=\lfloor KN\rfloor$) and $c$ is the rate of migration between the active population and seed bank, i.e.\ initiation of dormancy and  resuscitation. 
For more details on the biological background see \cite{BGCKW16} and \cite{BEGCKW15}.

\begin{rem}[General model and two-island diffusion]
\label{rem:generalmodel}  
A natural extension of this model can be obtained by (potentially) adding noise in the second coordinate. For parameters $u_1,u_2,u_1',u_2', \alpha, \alpha' \geq 0$, $c,c'>0$ and independent standard Brownian motions $(B(t))_{t\geq 0}, (B'(t))_{t\geq 0}$ consider the initial value problem 
\begin{align}
\label{eq:system_general}
\text{d} X(t) & = \big[ \!\!-u_1X(t) +u_2(1\!-X(t)) + c(Y(t)\! -X(t))\big]\text{d}t \!+ \alpha \sqrt{X(t)(1-X(t))}\text{d}B(t), \notag \\[.1cm]
\text{d} Y(t) & = \big[\!\!-u_1'Y(t) +u_2'(1\!-Y(t)) + c'(X(t)\! -Y(t))\big]\text{d}t \!+ \alpha' \sqrt{Y(t)(1-Y(t))}\text{d}B'(t),
\end{align}
with $(X(0), Y(0)) =(x,y) \in [0,1]^2$. 

For $\alpha = 1$, $\alpha'=0$ and $c'=cK$ (for a $K>0$) this is the seed bank diffusion. 
For $\alpha'>0$ we obtain the diffusion of Wright's \emph{two-island model} initially introduced in \cite{W31} and considered in this form for example in \cite{KZH08}.
\end{rem}
As is standard, Theorem 3.2 in \cite{SS80} provides the existence of a unique strong solution for every (possibly random) initial condition $(X(0), Y(0))=(x,y) \in [0,1]^2$, both initial value problems \eqref{eq:system} and \eqref{eq:system_general} admit a unique strong semimartingale solution which is a two-dimensional continuous strong Markov diffusion. Denote by $A^{u_1,u_2,u_1',u_2', \alpha, \alpha',c,c'}$ its Markov generator and note that its domain contains $C^2(\lbrack 0,1 \rbrack^2)$, the space of twice continuously differentiable functions inside $\lbrack 0,1 \rbrack^2$ (continuous at the boundary). Since the model referred to will be clear from the context, we will usually omit the superscripts on the generator and simply write $A$. The action of $A$ on any test function $f \in C^2(\lbrack 0,1 \rbrack^2)$ is then described by
\begin{align}
\label{eq:generator_general}
Af(x,y)=&\big[-u_1x +u_2(1-x)+ c(y - x)\big]\frac{\partial f}{\partial x}(x,y) + \frac{\alpha^2}{2} x(1-x)\frac{\partial^2 f}{\partial x^2}(x,y) \notag \\ 
& + \big[-u_1'y +u_2'(1-y) + c'(x - y)\big]\frac{\partial f}{\partial y}(x,y)+ \frac{(\alpha')^2}{2} y(1-y)\frac{\partial^2 f}{\partial y^2}(x,y)
\end{align}
Note that there is no ambiguity in the definition of the process at the boundaries, since the diffusion part vanishes at 0 and 1, so that no further conditions on the domain of the generator are required. 

\begin{rem}[Extension to multiple seed banks]
\label{rem:multiple_seed_banks}
It is straightforward to extend system \eqref{eq:system} to several (e.g.\ geographically) subdivided seed banks. Indeed, let $k \ge 1$ and denote the frequency process for the active population by $(X(t))_{t\geq 0}$. Assume there are $k$ seed banks. For each seed bank $i \in \{1, \dots, k\}$ we consider specific parameters $c_i, K_i$ as well as mutation rates $u_1^i, u_2^i$ and denote its frequency process by $(Y_i(t))_{t\geq 0}$. Then the {\em seed bank diffusion with $k$ seed banks} is given by the following $k+1$ interacting SDEs  
\begin{align}
\label{eq:system3}
\text{d} X(t) &  \! = \big[-u_1X(t) +u_2(1 \! - \! X(t))+ \! \sum_{i=1}^k c_i (Y_i(t) \! - \! X(t))\big]\text{d}t+ \! \sqrt{X(t)(1-X(t))}\text{d}B(t), \notag \\
\text{d} Y_i(t) & \! = \big[\!\!-u_1^i Y_i(t) +u_2^i (1 \! - \! Y_i(t)) + K_i c_i(X(t) \!  - \! Y_i(t))\big]\text{d}t, \quad i \in \{1, \dots, k\},
\end{align}
with initial value $(X(0), Y_1(0), \dots, Y_k(0))=(x, y_1, \dots, y_k) \in [0,1]^{k+1}$. Existence and uniqueness are again standard with Theorem 3.2 in \cite{SS80}. Of course, this can be further generalized to multiple islands with multiple seed banks, see \cite{DHP16}.
\end{rem}

\subsection{A stochastic delay differential equation} 
\label{ssn:generalizations}

Note that the only source of randomness in the two-dimensional system $\eqref{eq:system}$ and also in its generalization $\eqref{eq:system3}$ is the one-dimensional Brownian motion $(B(t))_{t\geq 0}$ driving the fluctuations in the active population. This and the special form of the seed bank(s) allow us to reformulate this system as an essentially one-dimensional {stochastic delay differential equation}. Recall the notation from Remark \ref{rem:multiple_seed_banks} and abbreviate, for convenience, $u^i:=u_1^i+u_2^i$, $i \in \{1, \ldots, k\}$.

\begin{prop} 
\label{prop:sdde}
The solution to \eqref{eq:system3} with initial values $x,y_1, \dots, y_k\in [0,1]$ is a.s.\ equal 
to the solution of the unique strong solution of system of \emph{stochastic delay differential equations} 
\begin{align}
\label{eq:sdde}
\text{d}X(t)&=   \sum_{i=1}^k  c_i \bigg( y_i e^{-(u^i +K_i c_i)t}+\int_0^t e^{-(u^i+K_i c_i)(t-s)}(u_2^i+K_i c_i X(s)) \text{d}s - X(t) \bigg) \text{d}t \notag \\ 
& \quad \qquad + \big[-u_1X(t) +u_2(1-X(t))\big] \text{d}t + \sqrt{X(t)(1-X(t))}\text{d}B(t), \notag \\
dY_i(t)&= \bigg( -y_i(u^i +K_i c_i)e^{-(u^i +K_i c_i)t} \notag\\
  & \qquad  \;\; -(u^i+K_i c_i)\int_0^te^{-(u^i+K_i c_i)(t-s)}(u_2^i + K_i c_i X(s))\dd s +u_2^i + K_i c_i X(t) \bigg) \text{d} t,
\end{align}
for $i \in \{1, \dots, k\}$ with the same initial condition.
\end{prop}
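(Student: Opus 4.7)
The plan is to exploit the fact that randomness enters \eqref{eq:system3} only through the single Brownian motion $B$ driving $X$: for each $i$ the $Y_i$-equation is a linear first-order ODE whose time-dependent inhomogeneity depends only on the (random) path of $X$. Expanding the bracket gives
\[
\dot Y_i(t) \;=\; -(u^i + K_i c_i)\, Y_i(t) + u_2^i + K_i c_i\, X(t),
\]
which is solvable pathwise by the integrating factor $e^{(u^i + K_i c_i)t}$, yielding
\[
Y_i(t) \;=\; y_i\, e^{-(u^i+K_ic_i)t} \;+\; \int_0^t e^{-(u^i+K_ic_i)(t-s)}\bigl(u_2^i + K_i c_i X(s)\bigr)\,\text{d}s. \qquad (\star)
\]
This represents $Y_i$ as an explicit functional of the initial value $y_i$ and the trajectory $(X(s))_{0\le s\le t}$.

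I would then carry out two substitutions to obtain \eqref{eq:sdde}. First, plugging $(\star)$ into the coupling term $c_i(Y_i(t)-X(t))$ in the $dX$-equation of \eqref{eq:system3} and summing over $i$ produces exactly the integral expression in the first line of \eqref{eq:sdde}, while the mutation terms and diffusion term are unchanged. Second, differentiating $(\star)$ with respect to $t$ (Leibniz's rule applies because $X$ has continuous paths, so the integrand is jointly continuous in $(s,t)$) yields term-by-term the $dY_i$-equation of \eqref{eq:sdde}. This establishes that the strong solution of \eqref{eq:system3} solves \eqref{eq:sdde} pathwise.

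For the reverse direction, note that the $dY_i$-equation of \eqref{eq:sdde} is exactly the time derivative of the right-hand side of $(\star)$; hence any continuous process $Y_i$ satisfying it with $Y_i(0)=y_i$ coincides a.s.\ with $(\star)$. Substituting back into the $dX$-equation of \eqref{eq:sdde}, the integral-plus-boundary terms collapse to $\sum_i c_i(Y_i(t)-X(t))$, recovering the $X$-equation of \eqref{eq:system3}. Consequently the two systems are pathwise equivalent, and existence and pathwise uniqueness of strong solutions to \eqref{eq:sdde} transfer from the corresponding (already known) result for \eqref{eq:system3}, via Theorem 3.2 in \cite{SS80}.

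The proof is essentially bookkeeping by variation of constants on a skew system; there is no genuine obstacle. The only points requiring care are the justification of the differentiation under the integral in $(\star)$, which is immediate from the continuity of the paths of $X$, and ensuring that the equivalence holds $\omega$-by-$\omega$ rather than only in law — which is automatic since $(\star)$ is derived by a deterministic ODE manipulation on each fixed sample path.
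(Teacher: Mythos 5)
Your proposal is correct and follows essentially the same route as the paper: solve the $Y_i$-equation explicitly by an integrating factor (the paper does this via the semimartingale integration-by-parts formula, which reduces to your pathwise variation-of-constants since $Y_i$ has no noise), substitute the resulting representation into the $X$-equation, and argue the converse plus uniqueness transfer exactly as you do.
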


Note that the the first equation in \eqref{eq:sdde} does not depend on $Y_i, i=1, \dots, k$, and that the latter equations for the $Y_i$ are in turn deterministic functions of $X$, so that the system of SDDEs is essentially one-dimensional.

\begin{rem}
Let us consider a simple special case of the above result to reveal the underlying ``age structure'': It is an immediate corollary from the above that the seed bank diffusion solving $\eqref{eq:system}$ with parameters $c=1$,  $K=1$, $u_1=u_2=u_1'=u_2'=0$, started in $X(0)=x=y=Y(0) \in [0,1]$ is a.s.\ equal to the unique strong solution of the stochastic delay differential equations
\begin{align}
\label{eq:sdde_simplified}
\text{d}X(t) &= \bigg( x e^{-t} + \int_0^t e^{-(t-s)} X(s)  \text{d}s  - X(t) \bigg) \text{d} t + \sqrt{X(t)(1-X(t))}\text{d}B(t), \notag \\
\text{d}Y(t) &= \bigg( -y e^{-t} - \int_{0}^{t} e^{-(t-s)} X(s) \text{d}s +X(t)\bigg) \text{d} t,
\end{align}
with the same initial condition. This now provides an elegant interpretation of the \emph{delay} in the SDDE as the seed bank. Indeed, it shows that the \emph{type} ($a$ or $A$) of any ``infinitesimal'' resuscitated individual, is determined by the \emph{active} population present an \emph{exponentially distributed time ago} (with a cutoff at time 0), \emph{which the individual spent dormant in the seed bank}. The net effect is positive if the frequency of $a$-alleles at that time was higher than the current frequency, and negative if it was lower. This is the forward-in-time equivalent of the model for seed banks or dormancy in the the coalescent context as formulated in \cite{KKL01}, where the seed bank is modelled by having individuals first choose a generation in the past according to some measure $\mu$ and then choosing their ancestor uniformly among the individuals present in that generation. The seed bank model given in \cite{BGCKW16} is obtained when $\mu$ is chosen to be geometric, i.e.\ memoryless, like the exponential distribution. This indicates that a forward-in-time model for more general dormancy models are to be searched among SDDEs rather than among SDEs. 

Such a reformulation is of course not feasible for the two island model, which is driven by two independent sources of noise. 
\end{rem}

\begin{proof}[Proof of Proposition \ref{prop:sdde}]
Recall e.g.\ from \cite[Proposition 3.1]{RY99} that for continuous semimartingales
$Z,W$ we have the integration by parts formula
$$
\int_0^t W(s) dZ(s) = W(t)Z(t) - Z(0) W(0) - \int_0^t Z(s) dW(s) - \langle Z,W \rangle(t),
$$
where $\langle \cdot, \cdot \rangle$ denotes the covariance process and $t\ge 0$. 
Note that for every differentiable deterministic function $f$, since $\langle Z, f \rangle \equiv 0,$ this reduces to
$$
f(t)Z(t)-f(0)Z(0)=\int_0^t f(s)\text{d}Z(s) +\int_0^t f'(s) Z(s) \text{d}s.
$$
Substituting the expression for $\text{d}Y_i(t)$ from \eqref{eq:system3}, we obtain that
\begin{align}
\label{eq:f(t)X_2(t)}
f(t)Y_i(t)-f(0)Y_i(0) \notag
=&\int_0^t f(s)\big[-u_1^i Y_i(s) +u_2^i (1-Y_i(s)) + K_i c_i(X(s) -Y_i(s))\big]\text{d}s \\ &+\int_0^t f'(s) Y_i(s) \text{d}s.
\end{align}
Letting
$
f(t):=e^{(u^i+K_i c_i)t}, t \ge 0,
$
equation \eqref{eq:f(t)X_2(t)} further simplifies to 
\begin{equation*}
\label{eq:f(t)X_2(t)2}
f(t)Y_i(t) - f(0)Y_i(0) =\int_0^t  e^{(u^i+K_i c_i)s}(K_i c_iX(s)+u_2^i) \text{d}s.
\end{equation*}
This can be rewritten, given the initial value $y_i=Y_i(0)$, as
$$ 
e^{(u^i+K_i c_i)t}Y_i(t)=y_i+\int_0^t  e^{(u^i+K_i c_i)s}(K_i c_iX(s)+u_2^i) \text{d}s.
$$
By dividing on both sides by $ e^{(u^i+K_i c_i)t}$ we finally get
\begin{align}\label{eq:solY}
Y_i(t)=y_i e^{-(u^i+K_i c_i)t}+\int_0^t \ e^{(u^i+K_i c_i)(s-t)}(K_i c_iX(s)+u_2^i) \text{d}s.
\end{align}
Plugging this into the first line of the system in \eqref{eq:system3} proves that the unique strong solution of \eqref{eq:system3} is a strong solution to \eqref{eq:sdde}. On the converse, let $(X(t),Y_1(t),\ldots,Y_k(t))_{t\geq 0}$ now be a solution to \eqref{eq:sdde}. (We already know that there exists at least one.) Using \eqref{eq:solY} we immediately see that $(X(t))_{t\geq0}$ solves the first equation in \eqref{eq:system3}. Likewise, using \eqref{eq:solY} in the right-hand-side of the last $k$ equations in \eqref{eq:sdde}, we obtain the last $k$ equations of \eqref{eq:system3}. Since \eqref{eq:system3} has a unique solution, this must then hold for \eqref{eq:sdde}, too, and the two solutions coincide $\P$-almost surely.
\end{proof}

\section{Duality, moments and stationary distribution}
\label{sec:duality}

A convenient way to study the behavior of diffusions in population genetics has proven to be the usage of (moment-) duality for Markov processes, cf. \cite{JK14} for an overview of the technique and \cite{BGCKW16}, \cite{Et11} for examples of applications. The art lies in finding a suitable dual that can serve as such a tool. In \cite{BGCKW16} the \emph{moment dual of the seed bank diffusion (without mutation)} $(N(t),M(t))_{t\geq 0}$ (known therein as \emph{block-counting process of the seed bank coalescent}) is given as the continuous time Markov chain with values in $E := {\N}_0\times {\N}_0$ equipped with the discrete topology, with conservative generator $\bar A$ given by: 
\begin{align*} 
\bar A_{(n,m), (\bar n,\bar m)} = \begin{cases}
		    \binom{n}{2}& \text{if } (\bar n,\bar m) = (n-1,m),\\
		    cn & \text{if } (\bar n,\bar m) = (n-1,m+1),\\
		    cKm  & \text{if } (\bar n,\bar m) = (n+1,m-1),
               \end{cases}
 \end{align*}            
 for every $(n,m)\in \N_0\times \N_0$ and zero otherwise off the diagonal.   
 
 A difficulty arises when adding \emph{mutation} to the model. There are several ways of incorporating this mechanism into a dual and we comment on this as well as the motivation behind our strategy -- adding a \emph{death state} to the state-space -- below, but will first formally introduce our dual.

\begin{defn}[Moment dual of the general diffusion] \label{defn:Dual_general}
Consider the space $E := {\N}_0\times {\N}_0 \cup \{(\partial,\partial)\}$ equipped with the discrete topology. Let $u_1,u_2,u_1',u_2',\alpha,\alpha' \geq 0$, $c,c'>0$. Define  $(N(t),M(t))_{t\geq 0}$ to be the continuous time Markov chain with values in $E$ with conservative generator $\bar A$ given by: 
\begin{align*} 
\bar A_{(n,m), (\bar n,\bar m)} = \begin{cases}
		    \alpha^2\binom{n}{2} + nu_2& \text{if } (\bar n,\bar m) = (n-1,m,),\\
		    (\alpha')^2\binom{m}{2}+m u^{'}_2 & \text{if } (\bar n,\bar m) = (n,m-1),\\
		    nu_1+m u^{'}_1 & \text{if } (\bar n,\bar m) = (\partial,\partial) ,\\
		    cn & \text{if } (\bar n,\bar m) = (n-1,m+1),\\
		    c'm  & \text{if } (\bar n,\bar m) = (n+1,m-1),
               \end{cases}
\end{align*}
for every $(n,m)\in \N_0\times \N_0$ and zero otherwise off the diagonal. We call this process the \emph{moment dual of the diffusion associated with the diffusion given in \eqref{eq:system_general}}.
\end{defn}

The name of the process will be justified in Lemma \ref{lemma:duality} below.
This dual arises in the context of a \emph{sampling duality}. See \cite{GCS17}  for a thorough introduction to the concept. It is based on the idea that the question of ``what is the probability of sampling $n$ individuals of type $a$ at time $t$, if the frequency of type $a$ is $x$ at time 0?'' can be answered in two ways: One, looking forward in time at the diffusion which will give the frequency of type $a$ individuals at time $t$ precisely, but also two, tracing back the genealogy to the number of ancestors of the sample present at time 0 and using the frequency $x$. It is precisely in this latter view that one realizes the need of an artificial \emph{death-state}. In order for all $n$ individuals in the sample to be of type $a$ at time $t$ it is imperative that we do not encounter a mutation from type $a$ to $A$ in the forward sense, i.e. a mutation from $A$ to $a$ in the coalescent-time, on their ancestral lines. Hence, the process $(N(t),M(t))_{t\geq0}$ is killed off as soon as this happens, since the probability for the sample to be of type $a$ only is now 0. At the same time, if we encounter a mutation of type $A$ to $a$ in the forward sense, i.e.\ a mutation from type $a$ to $A$ tracing backwards, we are assured all descendants of that line are of type $a$ with probability 1 and we can stop tracing it, whence the process is reduced by one line. See Figure \ref{fig:mutcoalescent} for an illustration.

\begin{figure}\label{fig:mutcoalescent}
\center
\includegraphics[width=.9\textwidth]{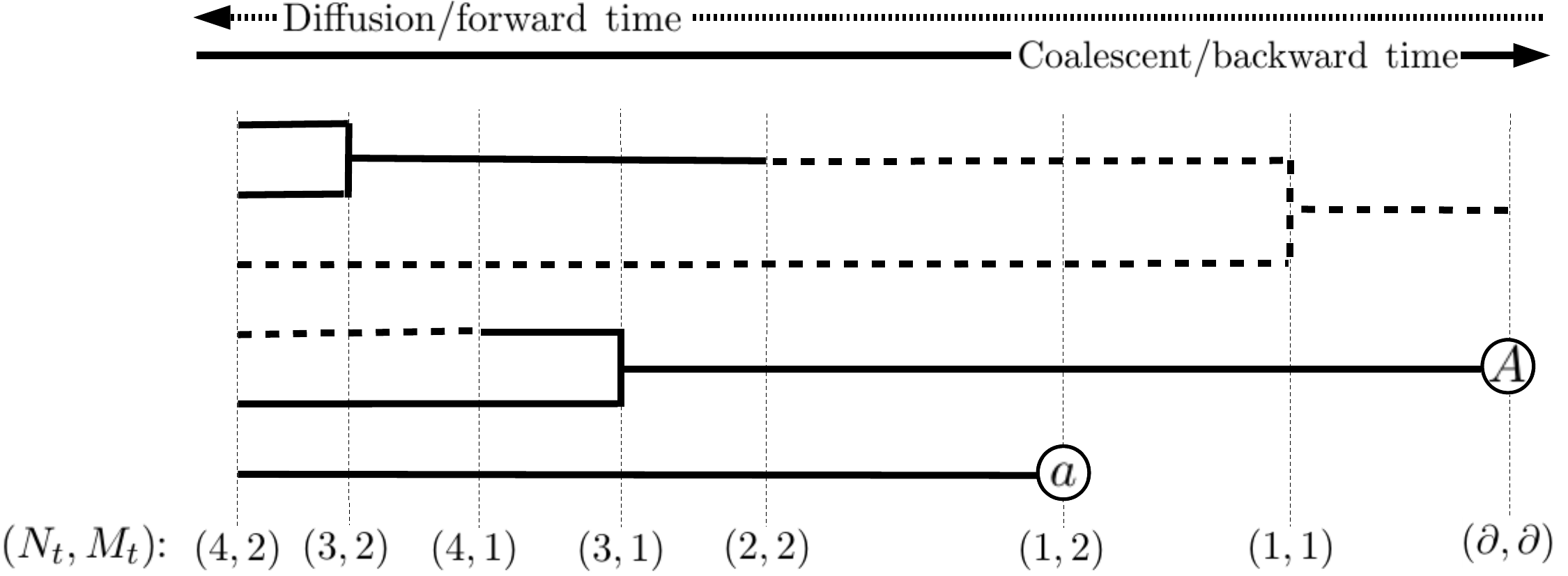}
\caption{The coalescent corresponding to the process defined in Definition \ref{defn:Dual_general}. The dashed and black lines represent the two islands. When a forward-mutation of type $A \mapsto a$ (``$a$'') occurs, the line is ended, since it ensures all its leaves to be of type $a$. A forward-mutation of type $a \mapsto A$ (``$A$'') renders it impossible to have all leaves of type $a$ and the process jumps to the death state $(\partial, \partial)$. }
\end{figure}

It is trivial to extend the dual process in Definition \ref{defn:Dual_general} to a general structured coalescent. A structured-mutation moment dual is new in the literature, as far as we know. These moment duals differ from the \emph{weighted} moment dual for the Wright-Fisher diffusion with mutation introduced in \cite{EG9} and studied extensively in \cite{GJL17} and \cite{EGT10}. The small difference between our construction for mutation and the construction in \cite{EG9}, namely the addition of the extra state $\partial$, makes our dual compatible with the presence of selection as in \cite{KN97}.

The following are straightforward, but important observations on the duals: Note that in the case of $u_1+u_2+u_1'+u_2'>0$, the moment dual of the general diffusion will reach either $\{(0,0)\}$ or $\{(\partial,\partial)\}$ in finite time (for any starting point $(n,m) \in E$), whereas for $u_1+u_2+u_1'+u_2'=0$ it will reach the set $\{(1,0),(0,1)\}$ in finite time ($\P$-a.s.)\ and then alternate between these two states. Furthermore observe that, whenever the dual of the general diffusion is started in some $(n,m)\in E$, it will stay in $\{0, \ldots, n+m\}\times\{0, \ldots, n+m\}\cup\{(\partial,\partial)\}$, hence the state space in this case is, indeed, finite.

\begin{lemma}
\label{lemma:duality}
Let $S:[0,1]^2\times E\rightarrow [0,1]$ be defined as 
\begin{align*}
S((x,y),(n,m)):=x^{n}y^{m}\1_{\N_0\times\N_0}((n,m)) 
\end{align*}
 for any $(x,y)\in[0,1]^2$ and $(n,m)\in E$ and let $u_1,u_2,u_1',u_2',\alpha,\alpha'\geq 0$, $c,c'>0$. 
Then for every $(x,y) \in [0,1]^2$, $(n,m) \in E$ and for any $t \geq 0$
\begin{align*}
\E^{x,y}\big[S\big((X(t),Y(t)),(n,m)\big)\big]=\E_{n,m}\big[S\big((x,y),(N(t),M(t))\big)\big],
\end{align*}
where $(N(t),M(t))_{t \geq 0}$ is defined in Definition \ref{defn:Dual_general} and $(X(t),Y(t))_{t \geq 0}$ is the solution to the SDE in equation \eqref{eq:system_general}.
\end{lemma}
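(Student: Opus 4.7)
The strategy is the classical moment-duality one: I verify the infinitesimal criterion
\[
A\,S(\,\cdot\,,(n,m))(x,y) \;=\; \bar A\, S((x,y),\,\cdot\,)(n,m) \qquad \text{for all } (x,y)\in[0,1]^2,\ (n,m)\in E,
\]
and then invoke the standard moment-duality theorem (see e.g.\ Proposition 1.2 of \cite{JK14}, or Corollary 4.4.13 in Ethier--Kurtz) to conclude that the two expectations in the statement agree. Because both processes live on compact state spaces ($(X,Y)$ in $[0,1]^2$ and $(N(t),M(t))$ starting from $(n,m)$ in the finite set $\{0,\dots,n+m\}^2\cup\{(\partial,\partial)\}$) and $S$ is bounded by $1$, the integrability side-conditions of that theorem are trivial to check.

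\textbf{Generator calculation.} Fix $(n,m)\in\N_0\times\N_0$. Using $S((x,y),(\partial,\partial))=0$ and the explicit form of $A$ in \eqref{eq:generator_general}, I compute $A(x^n y^m)$ via
\[
\partial_x x^n y^m = n x^{n-1}y^m,\quad \partial_{xx} x^n y^m = n(n-1)x^{n-2}y^m,
\]
and analogously for $y$. Expanding the mutation, migration and noise contributions in \eqref{eq:generator_general} term by term, I match them against the jump contributions of $\bar A$:
\begin{itemize}
\item the $u_2(1-x)\partial_x$ and $u_2'(1-y)\partial_y$ pieces produce $n u_2(x^{n-1}y^m - x^n y^m)$ and $m u_2'(x^n y^{m-1}-x^n y^m)$, which are exactly the contributions from the jumps $(n,m)\to(n-1,m)$ and $(n,m)\to(n,m-1)$ with rates $n u_2$ and $m u_2'$;
\item the killing pieces $-u_1 n x^n y^m$ and $-u_1' m x^n y^m$ coming from $-u_1 x\,\partial_x$ and $-u_1' y\,\partial_y$ correspond to the transition to the absorbing state $(\partial,\partial)$ with rate $n u_1 + m u_1'$, since $S((x,y),(\partial,\partial))=0$;
\item the migration term $c(y-x)\partial_x$ yields $c n(x^{n-1}y^{m+1}-x^n y^m)$, which matches the rate-$cn$ jump to $(n-1,m+1)$, and symmetrically for the $c'$ term;
\item the diffusive part $\tfrac{\alpha^2}{2}x(1-x)\partial_{xx}$ gives $\alpha^2\binom{n}{2}(x^{n-1}y^m-x^n y^m)$, reproducing the coalescence jump rate $\alpha^2\binom{n}{2}$ to $(n-1,m)$, and similarly for $\alpha'$.
\end{itemize}
Collecting everything yields precisely $\bar A\, S((x,y),\cdot)(n,m)$. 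For the boundary case $(n,m)=(\partial,\partial)$ both sides are identically $0$, since $S$ vanishes there and $(\partial,\partial)$ is absorbing for $(N,M)$.

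\textbf{Conclusion.} Once the generator identity is established pointwise, I apply the duality theorem: set $u(t,(x,y),(n,m)):=\E^{x,y}[S((X(t),Y(t)),(n,m))]$ and $v(t,(x,y),(n,m)):=\E_{n,m}[S((x,y),(N(t),M(t)))]$, and note that each solves the forward Kolmogorov equation associated with the corresponding generator, with the same initial datum $S$. The generator identity ensures that $u$ and $v$ satisfy the same PDE/ODE system, and uniqueness of bounded solutions (guaranteed by the compactness of the state spaces and boundedness of $S$) forces $u=v$, which is the claimed duality. The only mildly subtle point is the bookkeeping for the death state $\partial$, which is handled cleanly by the convention that $S$ vanishes there; no separate argument is needed.
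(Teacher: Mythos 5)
Your proposal is correct and follows essentially the same route as the paper: verify the pointwise generator identity $AS((x,y),(n,m))=\bar A S((x,y),(n,m))$ term by term (including the death state, handled by $S((x,y),(\partial,\partial))=0$) and then invoke the standard duality criterion (Proposition 1.2 in \cite{JK14}), with the boundedness/finite-effective-state-space observation disposing of the integrability and domain conditions just as in the paper.
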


\begin{proof}
Since $S:[0,1]^2\times E \rightarrow [0,1]$ is continuous (in the product topology of $[0,1]^2\times E$), the result follows by proving the assumptions of Proposition 1.2 in \cite{JK14}, which consist of  certain requirements on the respective generators $A$ of $(X(t),Y(t))_{t \geq 0}$ and $\bar A$ of $(N(t),M(t))_{t \geq 0}$.

Recall the generator $A$ of $(X(t),Y(t))_{t\geq 0}$ from \eqref{eq:generator_general} and observe that for any bounded function $h:E \rightarrow \R$, the generator of $(N(t),M(t))_{t\geq 0}$ is given by $\bar A h((\partial,\partial)) = 0$ and 
\begin{align*}
\bar A h (n,m) 	& = \left[\alpha^2\binom{n}{2}+nu_2\right] [h(n-1,m)-h(n,m)] \1_{\N}(n)\\
			& \quad +  \left[(\alpha')^2\binom{m}{2}+mu_2'\right][h(n,m-1)-h(n,m)]\1_{\N}(m)\\
			& \quad + c[h(n-1,m+1)-h(n,m)]\1_{\N}(n)+ c'[h(n+1,m-1)-h(n,m)]\1_{\N}(m) \\
			& \quad + [nu_1+mu_1'][h(\partial,\partial)-h(n,m)],
\end{align*}
for any $(n,m)\in \N_0\times \N_0$ with the convention that $\binom{1}{2}=0$. Let $P$ and $\bar P$ be the semigroups corresponding to $A$ and $\bar A$ respectively. Since $(N(t)+M(t))_{t \geq 0}$ is monotonically non-increasing, the assumptions that $S\big((x,y),(n,m)\big), P_tS \big((x,y),(n,m)\big)$ are in the domain of $\bar A$ and $S\big((x,y),(n,m)\big), \bar P_tS \big((x,y),(n,m)\big)$ are in the domain of $A$ are readily verified.

As $S((x,y),(\partial,\partial))=0$ for all $(x,y)\in[0,1]^2$, we immediately see that 
\begin{align*}
 AS \big((x,y),(\partial,\partial)\big) = 0 = \bar AS((x,y),(\partial,\partial))
\end{align*}
for any $(x,y)\in[0,1]^2$. Furthermore, if we fix $(x,y) \in [0,1]^2$ and  $(n,m) \in \N_0\times\N_0$,
\begin{align*}
AS \big((x,y),(n,m)\big) & = \left[-u_1x +u_2(1-x)+ c(y -x)\right] nx^{n-1}y^m \\
			  & \qquad + \frac{\alpha^2}{2}x(1-x)n(n-1)x^{n-2}y^m\\
			  & \qquad + \left[-u_1' y +u_2'(1-y) + c'(x -y)\right]mx^{n}y^{m-1}\\
			  & \qquad + \frac{(\alpha')^2}{2}y(1-y)m(m-1)x^{n}y^{m-2}\\
			  & = \Big[\alpha^2\binom{n}{2}+nu_2\Big] [x^{n-1}y^{m}-x^{n}y^{m}]\\
			  & \qquad +[(\alpha')^2\binom{m}{2}+mu_2'][x^{n}y^{m-1}-x^{n}y^{m}]\\
			  & \qquad +c[x^{n-1}y^{m+1}-x^{n}y^{m}]+ c'[x^{n+1}y^{m-1}-x^{n}y^{m}]\\
			  & \qquad + (nu_1+mu_1')[0-x^{n}y^{m}]\\
			  & = \bar AS((x,y),(n,m)).
\end{align*}
\end{proof}

This duality now allows us to use the process $(N(t),M(t))_{t \geq 0}$ to study the mixed moments of $(X(t),Y(t))_{t \geq 0}$ from which we can draw conclusions on the limiting behavior of the diffusions itself. The case with and without mutation differs strongly in this behavior. 

\begin{rem}
 In the absence of mutation in the general diffusion given in \eqref{eq:system_general} with $\alpha=\alpha'=1$
 \begin{align*}
  \lim_{t \to \infty}\mathbb{E}^{x,y} [X(t)^nY(t)^m] = \frac{yc + xc'}{c+c'}
 \end{align*}
for all $(n,m) \in \N_0\times\N_0\setminus\{(0,0)\}$ and all $(x,y) \in [0,1]^2$. From this we can conclude that $(X(t),Y(t))_{t \geq 0}$ converges $\P$-a.s.\ to a random variable $(X_{\infty},Y_{\infty})$ with values in $[0,1]^2$ whose distribution is given by
 \begin{align*}
  \frac{yc + xc'}{c+c'} \delta_{(1,1)} + \frac{(1-y)c + (1-x)c'}{c+c'}\delta_{(0,0)}
 \end{align*}
as can easily be seen by the same arguments as in Proposition 2.9 and Corollary 2.10 in \cite{BGCKW16}.
\end{rem}

\begin{prop}\label{prop:momentprobability}
Let $u_1,u_2,u_1',u_2',\alpha,\alpha'\geq 0$, $c,c'>0$ and assume that at least one mutation rate among $u_1, u_2, u_1', u_2'$ is non-zero. Then, for every $(n,m)\in \N_0\times\N_0$ and for every $(x,y) \in [0,1]^2$
\begin{align*}
\lim_{t \to \infty}\mathbb{E}^{x,y} [X(t)^nY(t)^m]=\P_{n,m}\left\{\lim_{t\rightarrow \infty}(N(t),M(t))=(0,0)\right\}.
\end{align*}
\end{prop}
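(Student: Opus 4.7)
The proof will be a direct consequence of the duality established in Lemma~\ref{lemma:duality} combined with the long-time behaviour of the finite-state dual chain. By Lemma~\ref{lemma:duality},
\begin{align*}
\E^{x,y}[X(t)^n Y(t)^m] \;=\; \E_{n,m}\bigl[\,x^{N(t)}\,y^{M(t)}\,\mathbf{1}_{\N_0\times\N_0}((N(t),M(t)))\,\bigr],
\end{align*}
so the task reduces to computing the $t\to\infty$ limit of this expectation via the dual.

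The key step is to argue that, under the standing assumption that at least one of $u_1,u_2,u_1',u_2'$ is strictly positive, the dual is \emph{almost surely absorbed in finite time} in one of the two absorbing states $(0,0)$ and $(\partial,\partial)$. First, starting from $(n,m)\in\N_0\times\N_0$, the dual never leaves the finite set $\{(n',m'): n'+m'\le n+m\}\cup\{(\partial,\partial)\}$, since migration preserves $n+m$, coalescence and $u_2,u_2'$-mutations decrease it, and $u_1,u_1'$-mutations send the chain into $(\partial,\partial)$. Next, a short case analysis on which mutation rate is strictly positive -- combined with the positive migration rates $cn$ and $c'm$, which let each line eventually visit the coordinate on which the active mutation sits -- shows that from every non-absorbing state there is a path of positive probability to an absorbing state. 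Standard Markov-chain theory for finite state spaces then gives almost sure absorption in finite time. I would prefer to write this out explicitly rather than only cite the remark preceding Definition~\ref{defn:Dual_general}, since this is where the hypothesis on the mutation rates is used.

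With this in hand, the proof finishes quickly. On the event $\{(N(t),M(t))\to(0,0)\}$ the dual reaches $(0,0)$ at a finite time and stays there, so the integrand equals $1$ for all sufficiently large $t$ (using the convention $0^0=1$ in case $x$ or $y$ vanishes). On the complementary absorption event $\{(N(t),M(t))\to(\partial,\partial)\}$ the indicator $\mathbf{1}_{\N_0\times\N_0}$ is eventually $0$, so the integrand vanishes for large $t$. Since $0\le S\le 1$, dominated convergence yields
\begin{align*}
\lim_{t\to\infty}\E^{x,y}[X(t)^n Y(t)^m] \;=\; \P_{n,m}\Bigl\{\lim_{t\to\infty}(N(t),M(t))=(0,0)\Bigr\},
\end{align*}
as claimed. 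There is no real obstacle in this proof: duality does essentially all the work, and the only delicate point is the a.s.-absorption of the dual, which is entirely elementary once one sees that the accessible state space is finite.
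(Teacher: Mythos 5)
Your proposal is correct and follows essentially the same route as the paper's proof: duality from Lemma~\ref{lemma:duality}, almost sure absorption of the dual in $(0,0)$ or $(\partial,\partial)$ in finite time, and bounded convergence (with the convention $0^0=1$). The only difference is that you spell out the absorption argument via the finite accessible state space and accessibility of the absorbing states, which the paper simply cites from the remark preceding Lemma~\ref{lemma:duality}; this is a fine, elementary elaboration rather than a different method.
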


\begin{proof}
Fix $(x,y) \in [0,1]^2$ and $(n,m) \in \N_0\times\N_0$. Then
\begin{align*}
 \lim_{t \to \infty}\mathbb{E}^{x,y} [X(t)^nY(t)^m]	& = \lim_{t \to \infty}\mathbb{E}^{x,y} \big[\underbrace{X(t)^nY(t)^m\1_{\N_0\times \N_0}(n,m)}_{= S((X(t),Y(t)),(n,m))}\big] \\
							& = \lim_{t \to \infty}\mathbb{E}_{n,m} \left[x^{N(t)}y^{M(t)}\1_{\N_0\times \N_0}(N(t),M(t))\right]\\
							& = \mathbb{E}_{n,m} \left[\lim_{t \to \infty}x^{N(t)}y^{M(t)}\1_{\N_0\times \N_0}(N(t),M(t))\right]\\
							& = \P_{n,m}\left\{\lim_{t\rightarrow \infty}(N(t),M(t))=(0,0)\right\},
\end{align*}
where the last three equalities follow from the duality in Lemma \ref{lemma:duality}, bounded convergence and the fact that $(N(t),M(t))_{t \geq 0}$ is absorbed in $(0,0)$ or $(\partial,\partial)$ \emph{in finite time} $\P$-a.s., respectively. (We use the convention that $0^0=1$.)
\end{proof}
We can now use this to characterize the long-term behavior of the diffusion $(X(t),Y(t))_{t \geq 0}$ solving \eqref{eq:system_general} with mutation. In order to do this, note that Proposition \ref{prop:momentprobability} implies that the following is well-defined.

\begin{defn}\label{def:mixedmoments}
 Let $u_1,u_2,u_1',u_2',\alpha,\alpha'\geq 0$, $c,c'>0$ and assume  $ u_1 + u_2 + u_1' + u_2'>0$ in \eqref{eq:system_general}. For any $(n,m) \in \N_0\times \N_0$ define
 \begin{align*}
  M_{n,m} := \lim_{t \to \infty}\E^{x,y} \lbrack X^n (t) Y^{m} (t) \rbrack\qquad \text{ (for any } (x,y) \in [0,1]^2).
 \end{align*}
\end{defn}

\begin{prop}\label{prop:invariantdistribution}
Let $u_1,u_2,u_1',u_2',\alpha,\alpha'\geq 0$, $c,c'>0$. Assume $ u_1 + u_2 + u_1' + u_2'>0$ in \eqref{eq:system_general}. Then there exists a unique invariant distribution $\mu$ for $(X(t),Y(t))_{t\geq 0}$ and the diffusion is ergodic in the sense that
\begin{align*}
 \P^{x,y}\left\{(X(t),Y(t)) \in \cdot\,\right\} \xrightarrow{\;\;w\;\;} \mu,\quad \text{for $t \to \infty$},
\end{align*}
for all starting points $(x,y) \in [0,1]^2$, where $\xrightarrow{\;\;w\;\;}$ denotes weak convergence of measures. Furthermore $\mu$ is characterized by
\begin{align}\label{eq:invmeasure_moments}
 \forall\, n,m \in \N_0:\qquad \int_{[0,1]^2} x^ny^m \dd \mu(x,y)\; = \;M_{n,m}.
\end{align}
\end{prop}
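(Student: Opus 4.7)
My plan is to combine the moment duality (Lemma \ref{lemma:duality}) with the compactness of $[0,1]^2$, exploiting the fact that measures on a compact subset of $\R^2$ are determined by their moments. Proposition \ref{prop:momentprobability} already gives, for every $(n,m) \in \N_0\times\N_0$ and every starting point $(x,y)$, that $\E^{x,y}[X(t)^n Y(t)^m] \to M_{n,m}$ as $t\to\infty$, with the limit \emph{independent} of $(x,y)$. This is exactly the moment data needed to identify an invariant distribution $\mu$.

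The first concrete step is to upgrade convergence of moments to weak convergence of the laws $\P^{x,y}\{(X(t),Y(t))\in\cdot\,\}$. Given $\eps>0$ and $f\in C([0,1]^2)$, Stone--Weierstrass produces a polynomial $p$ with $\|f-p\|_\infty < \eps$; since $\E^{x,y}[p(X(t),Y(t))]$ is a finite linear combination of mixed moments, it converges to a constant $L_p$ that does not depend on $(x,y)$. Hence the asymptotic oscillation of $\E^{x,y}[f(X(t),Y(t))]$ is at most $2\eps$, and letting $\eps\downarrow 0$ yields a limit $L_f$ which is again independent of the initial condition. The functional $f\mapsto L_f$ is positive, linear, and satisfies $L_1=M_{0,0}=1$, so by the Riesz representation theorem there exists a unique Borel probability measure $\mu$ on $[0,1]^2$ with $L_f=\int f\, d\mu$ for all $f\in C([0,1]^2)$. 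This yields the weak convergence statement and, on specialising to $f(x,y)=x^n y^m$, the moment characterisation \eqref{eq:invmeasure_moments}.

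For invariance I would use that the diffusion is Feller on the compact state space $[0,1]^2$, so $P_s f \in C([0,1]^2)$ for every $s\geq 0$ and $f\in C([0,1]^2)$. The Markov property combined with the previous step then gives
\begin{align*}
\int f\, d\mu = L_f &= \lim_{t\to\infty} \E^{x,y}\big[f(X(t+s),Y(t+s))\big] \\
&= \lim_{t\to\infty} \E^{x,y}\big[(P_s f)(X(t),Y(t))\big] = L_{P_s f} = \int P_s f\, d\mu,
\end{align*}
so $\mu$ is invariant. Uniqueness is equally short: if $\nu$ is any other invariant probability measure on $[0,1]^2$, integrating the pointwise convergence $\E^{x,y}[f(X(t),Y(t))]\to L_f$ against $\nu(dx,dy)$ and applying bounded convergence yields $\int f\, d\nu = L_f = \int f\, d\mu$ for every $f\in C([0,1]^2)$, so $\nu=\mu$.

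The only genuinely delicate point is the first step -- passing from convergence of moments to weak convergence of laws with a common, initial-condition-independent limiting measure. Once this is in place, compactness of $[0,1]^2$, Stone--Weierstrass and the Feller property turn invariance and uniqueness into essentially formal manipulations.
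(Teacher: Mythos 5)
Your proposal is correct and takes essentially the same route as the paper: convergence of the mixed moments $M_{n,m}$ (Proposition \ref{prop:momentprobability}), determinacy of measures on the compact square by their moments, and density of polynomials in $C([0,1]^2)$ to upgrade to weak convergence independent of the starting point. The only real differences are cosmetic -- you construct $\mu$ via Stone--Weierstrass and the Riesz representation theorem where the paper simply cites unique solvability of the moment problem on $[0,1]^2$, and you spell out the invariance and uniqueness arguments (using the Feller property, which here follows e.g.\ from the duality showing that $P_t$ maps polynomials to polynomials) that the paper dismisses as ``easy to check''.
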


\begin{proof}
The unique solvability of the moment problem on $[0,1]^2$ yields existence of a unique distribution $\mu$ such that \eqref{eq:invmeasure_moments} holds, which in particular implies 
\begin{align*}
1=M_{0,0}=\int_{[0,1]^2}x^0y^0\dd \mu(x,y)=\mu([0,1]^2). 
\end{align*}
From the definition of the $M_{n,m}, n,m\in\N_0$, we know that
\begin{align*}
 \lim_{t\rightarrow \infty} \int_{[0,1]^2}& p(x,y)\dd\P^{\bar x,\bar y}\left\{(X(t),Y(t)) \in \,\cdot\,\right\} 	\\ 
 & = \lim_{t \to \infty}\E^{\bar x,\bar y} \lbrack p(X(t),Y(t)) \rbrack 
					       = \int_{[0,1]^2} p(x,y) \dd\mu(x,y) 
\end{align*}
for any polynomial $p$ on $[0,1]^2$ (and any $(\bar x,\bar y) \in [0,1]^2$).  Since the polynomials are dense in the set of continuous (and bounded) functions on $[0,1]^2$ we can conclude that 
\begin{align*}
\P^{x,y}\left\{(X(t),Y(t)) \in \cdot\,\right\} \xrightarrow{\;\;w\;\;} \mu. 
\end{align*}
It is now easy to check that $\mu$ is the unique invariant distribution of $(X(t),Y(t))_{t\geq 0}$. 
\end{proof}

Unfortunately, we cannot calculate $\mu$ explicitly, but we can give the following characterization of its mixed moments:
\begin{lemma}\label{lemma:moments} 
Let $u_1,u_2,u_1',u_2',\alpha,\alpha'\geq 0$, $c,c'>0$ and assume $ u_1 + u_2 + u_1' + u_2'>0$.
Then $M_{0,0} = 1$ and the following recursion holds for all $(n,m) \in \N_0\times\N_0\setminus\{(0,0)\}$
\begin{align*}
 M_{n,m}=\frac{1}{D_{n,m}}\left(a_nM_{n-1,m} + a_m'M_{n,m-1} + cnM_{n-1,m+1} + c'mM_{n+1,m-1}\right), 
\end{align*}
where
\begin{align*}
 a_n &:= \alpha^2\binom{n}{2} + n u_2, \qquad \qquad  \qquad a_m':= (\alpha')^2\binom{m}{2} + m u_2, \\
 D_{n,m} &:=\alpha^2\binom{n}{2} + (\alpha')^2\binom{m}{2} + (u_2+u_1)n + (u'_1+u'_2)m + cn + c'm.
\end{align*}
We use the notational convention that $\binom{1}{2} = 0$ and $M_{-1,k}=M_{k,-1}=0$ for any $k \in \N$.
\end{lemma}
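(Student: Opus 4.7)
The natural approach is first-step analysis on the dual chain. Proposition~\ref{prop:momentprobability} identifies $M_{n,m}$ with the probability that the dual $(N(t),M(t))_{t\geq 0}$ starting from $(n,m)$ is eventually absorbed in $(0,0)$ rather than in the graveyard $(\partial,\partial)$. Because the dual is a continuous-time Markov chain on the finite state space $\{0,\dots,n+m\}^2 \cup \{(\partial,\partial)\}$ (observed earlier in the text) whose only absorbing states are $(0,0)$ and $(\partial,\partial)$, and whose absorption time is a.s.\ finite, this absorption probability satisfies a standard linear system indexed by the transient states.

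The key steps I would carry out, in order, are as follows.
\begin{enumerate}
\item The case $(n,m)=(0,0)$ is immediate from the definition $M_{0,0}=\lim_{t\to\infty}\E^{x,y}[1]=1$.
\item For $(n,m)\neq (0,0)$, note that $D_{n,m}>0$: indeed if $n\geq 1$ then $cn>0$ and if $m\geq 1$ then $c'm>0$, so the total jump rate out of $(n,m)$ is positive and the chain leaves $(n,m)$ in an a.s.\ finite exponential time.
\item Apply the strong Markov property at the first jump time. Writing $h(n,m):=\P_{n,m}\{\lim_{t\to\infty}(N(t),M(t))=(0,0)\}$, conditioning on the first transition gives
\begin{align*}
h(n,m) = \frac{1}{D_{n,m}}\Bigl(&\; a_n\, h(n-1,m) + a_m'\, h(n,m-1) \\
& + cn\, h(n-1,m+1) + c'm\, h(n+1,m-1) + (nu_1+mu_1')\, h(\partial,\partial)\Bigr),
\end{align*}
using that the jump probabilities are the rates in Definition~\ref{defn:Dual_general} divided by the total rate $D_{n,m}$ (which, when expanded, matches the definition of $D_{n,m}$ in the statement).
\item Since $(\partial,\partial)$ is absorbing with $S((x,y),(\partial,\partial))=0$, we have $h(\partial,\partial)=0$, so the last term drops out.
\item Identify $h=M_{\cdot,\cdot}$ via Proposition~\ref{prop:momentprobability} and verify the boundary conventions: if $n=0$ then $a_n=0$ and $cn=0$, so the convention $M_{-1,k}=0$ costs nothing; symmetrically for $m=0$. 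Multiplying through by $D_{n,m}$ yields the recursion.
\end{enumerate}

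I do not expect any genuine obstacle; this is essentially textbook first-step analysis for an absorbing continuous-time Markov chain on a finite state space. The only mild points to check are (a) that one is allowed to condition on the first jump (guaranteed by $D_{n,m}>0$ for $(n,m)\neq(0,0)$ plus a.s.\ finite absorption, already established in the remark preceding Lemma~\ref{lemma:duality}), and (b) that the coefficients in the expansion of $D_{n,m}$ in the statement really do coincide with the sum of the five outgoing rates from $(n,m)$, which is a direct bookkeeping check against Definition~\ref{defn:Dual_general}.
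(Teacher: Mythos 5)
Your proof is correct and follows essentially the same route as the paper: the paper also defines the first jump time $\tau$ of the dual chain, invokes Proposition \ref{prop:momentprobability} to identify $M_{n,m}$ with the absorption probability in $(0,0)$, applies the strong Markov property at $\tau$, and then writes out the one-step transition probabilities (rates divided by the total rate $D_{n,m}$), with the graveyard state contributing zero. Your additional remarks on $D_{n,m}>0$ and the boundary conventions $M_{-1,k}=M_{k,-1}=0$ are just the bookkeeping the paper leaves implicit.
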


\begin{proof}
For the process $(N(t),M(t))_{t\geq 0}$ let 
\begin{align*}
\tau:=\inf\{t\geq 0:(N(t),M(t))\neq(n,m)\}. 
\end{align*}
 For any $(n,m)\neq (0,0)$, $\tau$ is a $\P_{n,m}$-a.s.\ finite stopping time. Using Proposition \ref{prop:momentprobability} in the first and third, and the strong Markov property in the second equality we see
\begin{align*}
M_{n,m} & = \P_{n,m}\left\{\lim_{t\rightarrow \infty}(N(t),M(t))=(0,0)\right\}\\
	& = \sum_{(i,j)\in \N_0\times \N_0}\P_{i,j}\left\{\lim_{t\rightarrow \infty}(N(t),M(t))=(0,0)\right\}\P_{n,m}\left\{(N(\tau),M(\tau))=(i,j)\right\}\\
	& = \sum_{(i,j)\in \N_0\times \N_0}M_{i,j}\P_{n,m}\left\{(N(\tau),M(\tau))=(i,j)\right\}.
\end{align*}
Writing out the values of $\P_{n,m}\left\{(N(\tau),M(\tau))=(i,j)\right\}$ finishes the proof.
\end{proof}

\begin{rem}
 Given the existence of an invariant distribution, the question of reversibility arises naturally. The classical Wright-Fisher frequency process with mutation is reversible. However, the diffusion process of the two-island model is not, as shown in \cite{KZH08}. It turns out, that the seed bank diffusion with mutation is not reversible in general, either. Assume for example that $c, u_1, u_2 \neq 0$ and $u_1'=u_1$, $u_2'=u_2$ and recall that for the diffusion to be reversible we would need 
 \begin{align*}
\E^{\mu} \lbrack f(X(t),Y(t))Ag(X(t),Y(t))\rbrack = \E^\mu \lbrack g(X(t),Y(t))Af(X(t),Y(t))\rbrack,  
 \end{align*}
 for all $f,g \in \mathcal C^2([0,1]^2)$. This, however, fails for $f(x,y)=x$ and $g(x,y)=y$ as can be checked calculating recursively the values for the mixed moments. 
\end{rem}

\section{Boundary classification}
\label{sec:boundary}

We begin with the simple observation that in the presence of mutation the marginals of the stationary distribution $\mu$ of the general diffusion \eqref{eq:system_general} have no atoms at the boundaries. To be precise, if we let $(X,Y)=(X(t),Y(t))_{t \geq 0}$ be the solution to \eqref{eq:system_general}, assume $u_1  u_2  u_1'  u_2'>0$
and recall that $\mu$ denotes the unique invariant distribution of $(X(t),Y(t))_{t \geq 0}$, then, for any $t>0$, we have
\begin{align}\label{eq:noatoms}
\mathbb{P}^\mu \left\{ X(t) \in \lbrace 0,1 \rbrace \right\} = \mathbb{P}^\mu \left\{ Y(t) \in \lbrace 0,1 \rbrace \right\} =0.
\end{align}
This is a straightforward extension of the corresponding observation for the two-island model in \cite[Proposition 1]{KZH08} with an entirely analogous proof, which is therefore ommitted here.

In the above, each of the parameters $u_1, u_2, u_1', u_2'$ is responsible for the value of exactly one of the probabilities in \eqref{eq:noatoms}. This correspondence will be further clarified in the following more detailed description of the boundary behavior of the solution to \eqref{eq:system_general}. Define the first hitting time of $X$ of the boundary 0 by
\begin{align*}
 \tau^X_0:=\inf\{t \geq 0 \mid X(t)=0\},
\end{align*}
and define $\tau^X_1$, $\tau^Y_0$ and $\tau^Y_1$ in the same manner. We say that \emph{started from the interior $X$ will never hit 0}, if for every initial distribution $\mu_0$ such that $\mu_0((0,1)^2)=1$ 
\begin{align*}
  \P^{\mu_0}\left( \tau^X_0 < \infty\right) = 0.
\end{align*}
Using analogous formulations for the other cases, we state the main result of this section:
\begin{thm}\label{thm:boundary}
 Let $(X(t),Y(t))_{t \geq 0}$ be the solution to \eqref{eq:system_general} with parameters satisfying $u_1,u_2,u_1',u_2', \alpha,\alpha'\geq 0$ and $c,c'>0$. 
 \begin{enumerate}
  \item Started from the interior $X$ will never hit 0 if and only if $2u_2 \geq \alpha^2$.
  \item Started from the interior $X$ will never hit 1 if and only if $2u_1 \geq \alpha^2$.
  \item Started from the interior $Y$ will never hit 0 if and only if $2u_2' \geq (\alpha')^2$.
  \item Started from the interior $Y$ will never hit 1 if and only if $2u_1' \geq (\alpha')^2$.
 \end{enumerate}
\end{thm}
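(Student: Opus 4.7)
By the two involutions $x\leftrightarrow 1-x$ (which swaps $u_1$ and $u_2$) and $y\leftrightarrow 1-y$ (which swaps $u_1'$ and $u_2'$), each preserving the structure of \eqref{eq:system_general}, parts (ii)--(iv) reduce to part (i), so I only treat (i). The plan is to apply McKean's argument: in each direction construct a Lyapunov function $\phi$ satisfying $A\phi\le C\phi$ for some $C\ge 0$, apply It\^o's formula to $e^{-Ct}\phi(X(t),Y(t))$ localised at $\tau^X_\varepsilon:=\inf\{t\ge 0:X(t)\le\varepsilon\}$, and pass to $\varepsilon\downarrow 0$ using the martingale convergence theorem on this stochastic interval.

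\textbf{Sufficient direction ($2u_2\ge\alpha^2\Rightarrow X$ does not reach $0$).} If $\alpha=0$ the $X$-equation is an ODE whose drift at $x=0$ equals $u_2+cy\ge 0$, so $X$ cannot cross $0$. For $\alpha>0$ set $\phi(x):=x^{-\gamma}$ with $\gamma\in(0,2u_2/\alpha^2-1]$ (or $\phi(x):=-\log x$ in the boundary case $2u_2=\alpha^2$). A direct computation from \eqref{eq:generator_general} gives
\begin{align*}
A\phi(x,y)\;=\;\gamma\, x^{-\gamma-1}\Big[\tfrac{\alpha^2(\gamma+1)}{2}-u_2-cy+\big(u_1+u_2+c-\tfrac{\alpha^2(\gamma+1)}{2}\big)x\Big].
\end{align*}
The leading constant is non-positive by the choice of $\gamma$, the term $-cy$ is non-positive since $y\in[0,1]$, and the linear-in-$x$ remainder is bounded; hence $A\phi(x,y)\le C\phi(x)$ uniformly on $(0,1]\times[0,1]$ for some $C\ge 0$. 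By It\^o and localisation, $e^{-C(t\wedge\tau^X_\varepsilon)}\phi(X(t\wedge\tau^X_\varepsilon))$ is a non-negative supermartingale of expectation $\le\phi(x)$. Letting $\varepsilon\downarrow 0$, continuity of $X$ forces $\phi(X(\tau^X_\varepsilon))\to +\infty$ on $\{\tau^X_0\le t\}$; Fatou's lemma then contradicts the uniform bound unless $\P^{x,y}\{\tau^X_0\le t\}=0$, and sending $t\to\infty$ concludes.

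\textbf{Necessary direction ($2u_2<\alpha^2\Rightarrow X$ reaches $0$ with positive probability).} This direction is substantially more delicate. The natural 1D candidate $\psi(x):=x^\beta$ with $\beta\in(0,1-2u_2/\alpha^2)$ produces
\begin{align*}
A\psi(x,y)\;=\;\beta\, x^{\beta-1}\Big[u_2+\tfrac{\alpha^2(\beta-1)}{2}+cy-\big(u_1+u_2+c+\tfrac{\alpha^2(\beta-1)}{2}\big)x\Big].
\end{align*}
The leading constant is strictly negative by the choice of $\beta$, but the term $cy\ge 0$ has the same order $x^{\beta-1}$ and obstructs the supermartingale property in general. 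I would correct $\psi$ into a joint Lyapunov function of the form $\tilde\psi(x,y):=x^\beta g(y)$ (or an additive modification), where $g:[0,1]\to(0,\infty)$ is chosen so that the contribution $cy\,g(y)$ at order $x^{\beta-1}$ is offset by the $Y$-generator acting on $g$ -- i.e.\ $g$ solves a one-dimensional inhomogeneous inequality $\mathcal{L}_Y g\le -cy\,g+Cg$ with $\mathcal{L}_Y$ the generator of the $Y$-coordinate. Granted such a $\tilde\psi$, one has $\tilde\psi(0,y)=0$ and $A\tilde\psi\le C\tilde\psi$, so $e^{-Ct}\tilde\psi(X(t),Y(t))$ is a bounded non-negative supermartingale converging a.s.\ to a limit $L$. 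The ergodicity from Proposition~\ref{prop:invariantdistribution} forces $X(t)$ to enter every neighbourhood of $0$, so $L=0$, and a standard excursion/regeneration argument at successive downcrossings of level $\varepsilon$ upgrades this to $\P\{\tau^X_0<\infty\}>0$.

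\textbf{Main obstacle.} The principal difficulty is constructing the corrected test function $\tilde\psi$ in the necessary direction: because the $cy$-perturbation of the $X$-drift enters the generator at the \emph{same} order in $x$ as the wanted negative leading term, naive separation of variables fails and the two coordinates must be genuinely coupled. A further subtlety is that in the pure seed bank case ($\alpha'=0$) the $Y$-coordinate is degenerate -- its generator has no diffusion term -- so the auxiliary inequality for $g$ reduces to a first-order ODE and the analysis may need to be supplemented by the SDDE representation from Proposition~\ref{prop:sdde}. Consequently, the two-island regime ($\alpha'>0$) and the pure seed bank regime ($\alpha'=0$) likely warrant slightly different choices of $\tilde\psi$.
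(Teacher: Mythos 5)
Your sufficient direction ($2u_2\ge\alpha^2$) is essentially sound and is the same McKean/Lyapunov--supermartingale idea the paper uses (the paper applies it to $\log p_0$ with the bound $2Ap_0-h_{p_0}^T\nabla p_0\ge -2\kappa_0 p_0$, you to $x^{-\gamma}$ resp.\ $-\log x$; note only that for $\phi=-\log x$ the inequality $A\phi\le C\phi$ fails near $x=1$ where $\phi=0$, so take $\phi=1-\log x$ or work with $A\phi\le C$ directly). The genuine gap is in the necessary direction ($2u_2<\alpha^2$), which you do not actually prove. First, the proposed correction $\tilde\psi(x,y)=x^\beta g(y)$ cannot exist in the form you describe: the obstructing term in $A\tilde\psi$ is $c\beta x^{\beta-1}y\,g(y)$, of order $x^{\beta-1}$, whereas the $Y$-part of the generator contributes $x^\beta\mathcal{L}_Y g(y)$, of order $x^{\beta}$; dividing $A\tilde\psi\le C\tilde\psi$ by $x^\beta$ and letting $x\downarrow0$ with $y>0$ fixed forces $+\infty\le Cg(y)$, so no positive $g$ depending on $y$ alone can satisfy your inequality near $\{x=0\}\cap\{y>0\}$. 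You flag exactly this coupling as the ``main obstacle'' but leave it unresolved, so there is no test function and no argument. Second, even granted such a $\tilde\psi$, the concluding scheme does not yield the statement: $e^{-Ct}\tilde\psi(X(t),Y(t))$ is bounded and converges to $0$ trivially because $e^{-Ct}\to0$, so the a.s.\ limit $L=0$ carries no information; the appeal to ergodicity (Proposition \ref{prop:invariantdistribution}) requires $u_1+u_2+u_1'+u_2'>0$, which is not assumed in the theorem (the regime $2u_2<\alpha^2$ includes all mutation rates zero, where the process fixates and is not ergodic), and would in any case need the invariant law to charge every neighbourhood of $\{x=0\}$; and the ``standard excursion/regeneration argument'' upgrading approach-to-$0$ into attainment of $0$ with positive probability is precisely the crux and is not supplied.

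It is worth stressing why this direction is genuinely delicate and not a routine fix: the sharp criterion $2u_2<\alpha^2$ does not involve $c$, so any argument that bounds the immigration term $c(y-x)$ by $c(1-x)$ (e.g.\ a one-dimensional comparison of $X$ with a Wright--Fisher diffusion with mutation rates $u_1$ and $u_2+c$, then Feller's test) only yields attainment under the strictly stronger condition $2(u_2+c)<\alpha^2$. The effect must be captured at the corner $\bar z=(0,0)$, where $y$ is itself small. This is exactly what the paper does: it observes that \eqref{eq:system_general} is a polynomial diffusion and invokes Theorem 5.7(iii) of \cite{FL16} (Proposition \ref{prop:thm5.7}), which reduces attainability of $\{p_0=0\}$ to the pointwise sign conditions $Ap_0(\bar z)=u_2\ge0$ and $2Ap_0(\bar z)-h_{p_0}(\bar z)^T\nabla p_0(\bar z)=2u_2-\alpha^2<0$ at $\bar z=(0,0)$. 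To complete your proof you would need either to cite (or reprove) that result --- whose proof is itself a nontrivial time-change/stochastic-interval argument, essentially McKean's argument run in reverse --- or to construct a genuinely two-dimensional localisation at the corner; the separated ansatz $x^\beta g(y)$ cannot do it.
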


\begin{rem}
Note that the theorem simply states that for, say, $2u_2 < \alpha^2$, there must \emph{exist} some initial distribution $\bar \mu_0$ such that $\P^{\bar \mu_0}\{\tau^X_0 < \infty\}>0$. However, we will in fact prove the following, more informative statement (and their respective analogs for $(ii)-(iv)$): 
 
 Let $2u_2 <\alpha^2$. Then, for any $s>0$ there exists an $\varepsilon>0$ such that
 \begin{align*}
  \P\left\{\Vert (X(0),Y(0)) - (0,0) \Vert < \varepsilon \right\} =1 \quad \Rightarrow \quad \P\left\{\tau^X_0 \leq s\right\} >0.
 \end{align*}
\end{rem}

To obtain this result, we regard our SDE as a \emph{polynomial diffusion}. These are solutions to (multidimensional) SDEs whose generator maps the set of polynomials of degree $n$ into itself (for any $n \in \N_0$), see for example \cite{FL16, LP16}. As explained in the remark after Definition 2.1 in \cite{FL16}, these are SDES of the form 
\begin{align}\label{eq:poldiff}
 \dd Z(t) = b(Z(t))\dd t + \sigma(Z(t))\dd W(t)
\end{align}
where $W$ is a (multidimensional) Brownian motion, $b$ consists of polynomials of degree at most 1 and $a(x,y) := \sigma(x,y)\sigma(x,y)^T$ of polynomials of degree at most 2.

A quick glance immediately allows the observation that our generalized SDE \eqref{eq:system_general} can be rewritten in the form of \eqref{eq:poldiff} with 
\begin{align*}
 b(x,y) := \begin{pmatrix}
           -u_1x+u_2(1-x)+c(y-x)\\
           -u_1'y+u_2'(1-y)+c'(x-y)
          \end{pmatrix} \text{ and }
 \sigma(x,y) := \begin{pmatrix}
                \alpha \sqrt{x(1-x)} & 0 \\
                0 & \alpha' \sqrt{y(1-y)}
               \end{pmatrix}
\end{align*}
on $[0,1]^2$ and a two-dimensional Brownian motion $W$. Since $b$ consists of polynomials of degree 1 and 
\begin{align*}
 a(x,y) := \sigma(x,y)\sigma(x,y)^T = \begin{pmatrix}
					  \alpha^2 x(1-x) & 0 \\
					  0 & (\alpha')^2 y(1-y)
				      \end{pmatrix}
\end{align*}
of polynomials of degree 2, our SDE is indeed a \emph{polynomial diffusion on $[0,1]^2$} in the sense (and notation) of \cite{FL16} and we are free to use the results found therein as well as techniques from this area.

Indeed the \lq if\rq\ direction is proven using one such technique known as ``McKean's argument'' - a martingale method that can be applied in any dimension and should therefore have the potential to be applicable for a large class of processes, see \cite{McKean}, Section 4 for an overview and further references.

The \lq only if\rq\ direction on the other hand is essentially a direct application of Theorem 5.7 (iii) in \cite{FL16}. The following proposition therefore mainly paraphrases the abovementioned result in our notation for the reader's convenience and the proof consists of assuring we consider a suitable set-up.

Define $\mathcal P = \{x,1-x,y,1-y\}$ where we abuse notation using $x$ for the map $(x,y)\mapsto x$ and similar for the other polynomials.

\begin{prop}[Theorem 5.7 (iii) in \cite{FL16}]\label{prop:thm5.7}
 Let $(X(t),Y(t))_{t \geq 0}$ be the $[0,1]^2$-valued solution to \eqref{eq:system_general}. Recall that it can be written in the form of \eqref{eq:poldiff} and that $A$ denotes the corresponding generator given in \eqref{eq:generator_general}. 
 
 For every polynomial $p \in \mathcal P$, assume there exists a vector of polynomials $h_p$ such that $a\nabla p = h_pp$. Furthermore assume the initial distribution $\mu_0$ to be such that $\mu_0((0,1))=1$.
 
Finally, let $\bar z \in [0,1]^2 \cap \left\{(x,y) \in [0,1]^2\mid p(x,y)=0\right\}$ be such that
 \begin{align*}
  Ap(\bar z) \geq 0 \qquad \text{ and } \qquad 2Ap(\bar z) - h_p(\bar z)^T\nabla p(\bar z) <0.
 \end{align*}
 Then for any $s>0$, there exists $\varepsilon >0$ such that 
 \begin{align*}
  \P^{\mu_0}\left\{\Vert (X(0),Y(0))-\bar z \Vert < \varepsilon\right\}=1 \;\; \Rightarrow \;\; \P^{\mu_0}\left\{\inf\{ t \geq 0\mid p(X(t),Y(t))=0\} \leq s\right\}>0.
 \end{align*}
\end{prop}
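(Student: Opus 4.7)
The plan is to recognize the claim as a direct transcription of Theorem~5.7(iii) of \cite{FL16} into our notation, so that the work reduces to verifying that the hypotheses of that theorem are in force here. The discussion preceding the proposition already establishes that \eqref{eq:system_general} is a polynomial diffusion on the compact set $[0,1]^2$, with drift $b$ of degree at most~$1$ and diffusion matrix $a=\sigma\sigma^T$ of degree at most~$2$, and that $[0,1]^2$ is invariant, since Theorem~3.2 in \cite{SS80} provides a $[0,1]^2$-valued strong solution. What remains is to supply the basic semi-algebraic presentation of the state space and to verify the existence of the polynomial vector fields $h_p$ demanded by the theorem.

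First I record that $[0,1]^2=\{(x,y):p(x,y)\ge 0 \text{ for all }p\in\mathcal P\}$ with $\mathcal P=\{x,1-x,y,1-y\}$, which matches the semi-algebraic framework of \cite{FL16}. Then I verify the existence of polynomial $h_p$ with $a\nabla p=h_p p$ for each $p\in\mathcal P$ by direct computation, which is immediate from the block-diagonal form of $a$:
\begin{align*}
h_x &= \bigl(\alpha^2(1-x),\,0\bigr)^T, & h_{1-x} &= \bigl(-\alpha^2 x,\,0\bigr)^T, \\
h_y &= \bigl(0,\,(\alpha')^2(1-y)\bigr)^T, & h_{1-y} &= \bigl(0,\,-(\alpha')^2 y\bigr)^T.
\end{align*}
The remaining hypotheses of the cited theorem, namely the sign conditions on $Ap(\bar z)$ and $2Ap(\bar z)-h_p(\bar z)^T\nabla p(\bar z)$ at the boundary point $\bar z$, and the support condition $\mu_0((0,1)^2)=1$ on the initial distribution, are precisely those assumed in the proposition.

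Because the proof is essentially a citation, the only genuine obstacle is the bookkeeping of notation: I must check that our generator $A$ from \eqref{eq:generator_general}, our gradient $\nabla p$, our diffusion matrix $a$, and the polynomial factors $h_p$ correspond exactly to the objects in the statement of Theorem~5.7(iii) of \cite{FL16}, and that the dimension $d=2$ together with the polynomial data above falls into the setting treated there. Once this alignment is made, the conclusion of that theorem is verbatim the conclusion of Proposition~\ref{prop:thm5.7}. The genuine mathematical content -- that the sign conditions force $p(X(\cdot),Y(\cdot))$ to reach zero in finite time with positive probability from every sufficiently small interior neighborhood of $\bar z$ -- is proved in \cite{FL16} via an It\^o analysis of $\log p(X,Y)$, where the identity $a\nabla p=h_p p$ produces a drift of order $1/p$ whose sign is determined by $2Ap-h_p^T\nabla p$; I do not need to reproduce this analysis.
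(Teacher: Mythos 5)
Your approach is the same as the paper's: the proposition is a transcription of Theorem~5.7(iii) in \cite{FL16}, and the proof consists of checking that the set-up matches --- the polynomial-diffusion form of \eqref{eq:system_general}, the semi-algebraic presentation $[0,1]^2=\{(x,y): p(x,y)\geq 0 \ \forall p\in\mathcal P\}$, and the factorization $a\nabla p = h_p p$ (which you compute explicitly; in the proposition this is actually a hypothesis, so your computation is harmless extra work that the paper defers to the proof of Theorem \ref{thm:boundary}).

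There is one omission relative to the paper's verification, and it is precisely the point on which the paper's short proof spends most of its effort: Theorem~5.7 in \cite{FL16} carries an additional standing assumption, namely that the occupation time set $\{t\geq 0 \mid p(X(t),Y(t))=0\}$ is a Lebesgue null set. You assert that the ``remaining hypotheses'' are only the sign conditions at $\bar z$ and the support condition on $\mu_0$, which silently drops this requirement. The paper addresses it explicitly: inspecting the last paragraph of the proof in \cite{FL16} shows that this assumption is used only to handle processes started \emph{on} the boundary, so under $\mu_0\bigl((0,1)^2\bigr)=1$ it may be dispensed with (alternatively one would have to verify it directly, which the paper avoids). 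Your proof should either make this argument or verify the null-set condition; as written, the hypotheses of the cited theorem have not all been accounted for. Incidentally, you correctly read the support condition as $\mu_0\bigl((0,1)^2\bigr)=1$, which is what the statement's $\mu_0((0,1))=1$ is meant to say.
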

As mentioned, the proof of this proposition consists mainly of ensuring the set-up is as in Theorem 5.7 in \cite{FL16}.
\begin{proof}
We have already observed that the SDE \eqref{eq:system_general} is indeed a \emph{polynomial diffusion on $[0,1]^2$}. 
The set of polynomials $\mathcal P$ describes the state space of our diffusion by
\begin{align*}
[0,1]^2 = \{(x,y) \in [0,1]^2 \mid \forall p \in \mathcal P:\, p(x,y)\geq0\}
\end{align*}
as required. The only further assumption in Theorem 5.7 is the requirement that $\{t \geq 0 \mid p(X(t),Y(t)) = 0\}$ be a Lebesgue-nullset. However, as is immediate from the last paragraph of the proof, this is only required in order to allow the process to start on the boundary. Since we assume our process to start in the interior of $[0,1]^2$ a.s., this requirement is not necessary. Hence the proposition follows directly from Theorem 5.7, (iii), in \cite{FL16}. 
\end{proof}

We now turn to the proof of the theorem.

\begin{proof}[Proof of Theorem \ref{thm:boundary}] We will use the notation from Equation \eqref{eq:poldiff} and begin with a short observation helpful for both parts of the proof.

Let $p_0(x,y):=x \in \mathcal P$. For $h_{p_0}(x,y):= (\alpha^2(1-x),0)^T$ we have
 \begin{align*}
  a\nabla p_0 (x,y) & = \begin{pmatrix}
                 \alpha^2x(1-x) & 0 \\ 0 & (\alpha')^2y(1-y)
                \end{pmatrix} 
                \begin{pmatrix}
                 1 \\ 0
                \end{pmatrix} \\
                &  =  x\begin{pmatrix}
                            \alpha^2(1-x) \\ 0
                        \end{pmatrix} \\
                        & = p_0(x,y)h_{p_0}(x,y).
          \end{align*}
Similarly, let $p_1(x,y):=1-x \in \mathcal P$. For $h_{p_1}(x,y):= (-\alpha^2x,0)^T$ we have
 \begin{align*}
  a\nabla p_1 (x,y) & = \begin{pmatrix}
                 \alpha^2x(1-x) & 0 \\ 0 & (\alpha')^2y(1-y)
                \end{pmatrix} 
                \begin{pmatrix}
                 -1 \\ 0
                \end{pmatrix} \\
                &  =  (1-x)\begin{pmatrix}
                            -\alpha^2x \\ 0
                        \end{pmatrix} \\
                        &= p_1(x,y)h_{p_1}(x,y).
          \end{align*}

\textbf{Part 1 ``$\Rightarrow$'':} We begin proving the `only if' statements, as they rely on the Proposition \ref{prop:thm5.7} we just introduced. Let $\bar z:=(0,0)$. Then $p_0(\bar z)=0$,
\begin{align*}
 Ap_0(\bar z) 	& = u_2 \geq 0 \quad \text{ and }\quad 
 2Ap_0(\bar z) - h_{p_0}(\bar z)^T\nabla p_0(\bar z)  = 2u_2 - \alpha^2  < 0
\end{align*}
where the latter holds if, and only if $ 2u_2 < \alpha^2$. Hence the `only if' in $(i)$ follows by Proposition \ref{prop:thm5.7}.

In the same way consider instead $\bar z:=(1,1)$. Then $p_1(\bar z)=0$,
\begin{align*}
 Ap(\bar z) 	& = u_1 \geq 0 \quad \text{ and }\quad  
 2Ap(\bar z) - h_p(\bar z)^T\nabla p(\bar z) = 2u_1 - \alpha^2  < 0 	
\end{align*}
and again, the latter holds, if, and only if $2u_1	 < \alpha^2$. Therefore, the `only if' in $(ii)$ follows from Proposition \ref{prop:thm5.7} as well.

The analogous statements in $(iii)$ and $(iv)$ hold by symmetry. 

\textbf{Part 2 ``$\Leftarrow$'':} We now turn to the proof of the `if' statements, which is more involved and uses McKean's argument as it similar to the proof of Proposition 2.2 in \cite{LP16}. The approach is the same for all four cases, whence we can start with general observations and only check the different cases in the very end. Recall that we assumed the initial distribution $\mu_0$ to be such that $\mu_0( (0,1)^2) =1$.

Take $p \in \mathcal P$ and let $h_p$ be a vector of polynomials such that $a\nabla p = h_pp$ (we saw in Part 1 that this always exists). Choose $z_p \in \{(x,y) \in [0,1]^2\mid p(x,y)\neq0\}$ and define 
\begin{align*}
\tau_p:=\inf\{t\geq 0 \mid p(X(t),Y(t))=0\}. 
\end{align*}
Note that each of the $\tau_p$ corresponds to one of the stopping times defined before Theorem \ref{thm:boundary}, hence, we want to prove that $\P^{\mu_0}\left(\tau_p <\infty\right)=0$.

It\={o}'s formula and the identity $a\nabla p = h_pp$ yield
\begin{align*}
 \log p(X(t),Y(t))	& = \log p(X(0),Y(0)) \\
			& \qquad + \int_0^t\left(\frac{Ap(X(t),Y(t))}{p(X(t),Y(t))}-\frac{1}{2}\frac{\nabla p^Ta\nabla p(X(t),Y(t))}{p(X(t),Y(t))^2}\right)\dd s \\
			& \qquad + \int_0^t\frac{\nabla p^T \sigma(X(t),Y(t))}{p(X(t),Y(t))}\dd W_s \\
			& = \log p(X(0),Y(0)) + \underbrace{\int_0^t \frac{2Ap(X(t),Y(t))-\nabla p^Th_p(X(t),Y(t))}{2p(X(t),Y(t))}\dd s}_{=:\;P(t)} \\
			& \qquad + \underbrace{\int_0^t \frac{\nabla p^T \sigma(X(t),Y(t))}{p(X(t),Y(t))}\dd W_s}_{=:\;M(t)}
\end{align*}
for any $t < \tau_p$.  Suppose now, we find a constant $\kappa_p>0$ such that
\begin{align}\label{eq:conditionKappa}
 2Ap(x,y)-h_p^T\nabla p(x,y) \geq -2\kappa_pp(x,y) \qquad \text{ for all } (x,y) \in [0,1]^2.
\end{align}
Then $P$ is adapted and c\`adl\`ag, defined on $[0,\tau_p)$ and
\begin{align*}
 \inf_{t \in [0,\tau_p\wedge T)} P(t) \geq -\kappa_pT >-\infty
\end{align*}
Since $M$ is a continuous local martingale on $[0,\tau_p)$ with $M(0)=0$, Proposition 4.3 in \cite{McKean} implies $\tau_p=\infty$, $\P^{\mu_0}$-almost surely (for details concerning stochastic processes on stochastic intervals see for example \cite{Mai77}).

Recall $p_0(x,y)=x$ and the assumption in $(i)$ that $2u_2 -\alpha^2 \geq 0$. Set 
\begin{align*}
\kappa_0:= u_1+u_2+c-\alpha^2/2>0 
\end{align*}
(since $c>0$) and observe that then 
\begin{align*}
 2Ap_0(x,y)-h_{p_0}^T\nabla p_0(x,y)	& = x(-2u_1-2u_2-2c+\alpha^2) + y2c + 2u_2 -\alpha^2 \\
					& \geq x(-2u_1-2u_2-2c+\alpha^2) + 2u_2 -\alpha^2 \\
					& \geq -2\kappa_0x = -2\kappa p_0(x,y)\qquad \text{ for all } (x,y) \in [0,1]^2.
\end{align*}
Hence \eqref{eq:conditionKappa} holds for $p_0$ and since $\tau_{p_0} = \tau^X_0$, the proof of $(i)$ is completed.

For $(ii)$ we assumed $2u_1 -\alpha^2 \geq 0$ and will use $p_1(x,y)=1-x$. Set 
\begin{align*}
\kappa_1:= u_2+c>0, 
\end{align*}
since then
\begin{align*}
 2Ap_1(x,y)-h_{p_1}^T\nabla p_1(x,y)	& = x(2u_1+2u_2+2c-\alpha^2) - y2c - 2u_2 \\
					& \geq x(2u_1+2u_2+2c-\alpha^2) - 2c - 2u_2 \\
					& \geq -2\kappa_1(1-x) = -2\kappa_1p_1(x,y)\qquad \text{ for all } (x,y) \in [0,1]^2.
\end{align*}
Again, \eqref{eq:conditionKappa} holds for $p_1$ and the equality $\tau_{p_1}=\tau^X_1$ completes the proof of $(ii)$. 

As before, the remaining statements follow by symmetry.
\end{proof}

{\bf Acknowledgements.} The authors would like to thank S.\ Pulido for pointing out the connection to polynomial diffusions. JB and MWB supported by DFG Priority Programme 1590 ``Probabilistic Structures in Evolution'', project BL 1105/5-1, EB, AGC and MWB by Berlin Mathematical School and RTG 1845 ``Stochastic Analysis with applications in biology, finance and physics''.

\bibliographystyle{alpha}

\bibliography{Bib_WFdiffusion}

\end{document}